\documentclass{amsart}

\usepackage[pdftex]{graphicx}
\usepackage{pslatex}
\usepackage{latexsym}
\usepackage{amsfonts, amsmath, amssymb, amsthm}
\usepackage{txfonts}
\usepackage{mathrsfs}
\usepackage{mathtools}
\usepackage{changepage}

\newcommand{\real}{\mathbb{R}}

\newcommand{\rmd}{\,\mathrm{d}}

\newcommand{\rmC}{\mathrm{C}}
\newcommand{\rmD}{\mathrm{D}}

\newcommand{\rmB}{\mathrm{B}}
\newcommand{\rmT}{\mathrm{T}}
\newcommand{\rmS}{\mathrm{S}}

\newcommand{\trace}{\mathrm{trace}}

\newcommand{\II}{\mathrm{I}\hspace{-0.8pt}\mathrm{I}}
\newcommand{\st}{\,|\,}

\newtheorem{theorem}{Theorem}[section]
\newtheorem{lemma}[theorem]{Lemma}

\numberwithin{equation}{section}

\begin{document}

\title[HEAT FLOW WITHIN CONVEX SETS]{HEAT FLOW WITHIN CONVEX SETS}
\author{James Dibble}
\address{Department of Mathematics and Statistics, University of Southern Maine, 66 Falmouth Street, Portland, ME 04103}
\email{james.dibble@maine.edu}
\thanks{Some of these results were proved while I was at Rutgers University--New Brunswick and Capital Normal University, during which times I benefitted greatly from conversations with Xiaochun Rong, Penny Smith, and Armin Schikorra. I'm also grateful to Alessandro Goffi for helpful correspondence about the parabolic strong maximum principle.}

\setlength{\topmargin}{1pt}

\maketitle

\begin{abstract}

\noindent A solution to the heat equation between Riemannian manifolds, where the domain is compact and possibly has boundary, will not leave a compact and locally convex set before the image of the boundary does.

\end{abstract}

\section{Introduction}

In their foundational paper \cite{EellsSampson1964}, Eells--Sampson invented the harmonic map heat flow for maps between Riemannian manifolds. If $M$ is a compact Riemannian manifold\footnote{For the sake of simplicity, all manifolds in this paper are assumed to be smooth.} without boundary, $N$ a Riemannian manifold, and $u_0 : M \times \{ 0 \} \rightarrow N$ a $C^1$ map, this flow is a solution $u : M \times [0,\varepsilon) \rightarrow N$ to their heat equation
\begin{equation}\label{heat equation}
    \begin{aligned}
        &\frac{\partial u}{\partial t} = \tau & \textrm{ on } & \phantom{aa} M \times [0,\varepsilon)\\
        &u = u_0 & \textrm{ on } & \phantom{aa} M \times \{ 0 \}
    \end{aligned}
\end{equation}
Here, $\tau$ denotes the tension field of $u$, which is the trace of the second fundamental form of its positive time-slices. Eells--Sampson proved short-term existence and uniqueness of solutions to \eqref{heat equation} for any $\rmC^1$ initial data, as well as long-term existence and uniform subconvergence to harmonic maps when $N$ has nonpositive sectional curvature. This was improved to uniform convergence by Hartman \cite{Hartman1967}. The case where $\partial M \neq \emptyset$ was handled by Hamilton \cite{Hamilton1975}, who proved short-term existence and uniqueness of solutions to the corresponding Dirichlet problem.

\begin{theorem}[Hamilton]\label{short-term existence}
    Let $M$ and $N$ be compact Riemannian manifolds, where $M$ has boundary $\partial M \neq \emptyset$. If $u_0 : M \times \{ 0 \} \rightarrow N$ and $f : \partial M \times [0,T] \rightarrow N$ are smooth maps satisfying $u_0 = f$ on $\partial M \times \{ 0 \}$, then there exists $0 < \varepsilon < T$ such that the heat equation
    \begin{equation}\label{dirichlet heat equation}
        \begin{aligned}
            &\frac{\partial u}{\partial t} = \tau & \textrm{ on } & \phantom{aa} M \times [0,\varepsilon) \setminus \partial M \times \{ 0 \} \\
            &u = u_0 & \textrm{ on } & \phantom{aa} M \times \{ 0 \}\\
            &u = f & \textrm{ on } & \phantom{aa} \partial M \times [0,\varepsilon)
        \end{aligned}
    \end{equation}
    has a unique solution $u : M \times [0,\varepsilon) \rightarrow N$, which is continuous on $M \times [0,\varepsilon)$ and smooth except at the corner $\partial M \times \{ 0 \}$.
\end{theorem}

\noindent Hamilton also proved the long-term existence of solutions to \eqref{dirichlet heat equation}, as well as their uniform convergence to harmonic maps, when $N$ has nonpositive sectional curvature and the boundary data is fixed. A key point in the proof is that, for any solution $u : M \times [0,\varepsilon) \rightarrow N$ to the heat equation, the potential energy density $e = \frac{1}{2} \| \tau \|^2 = \frac{1}{2} g(\tau,\tau)$, where $g$ is the metric on $N$, satisfies
\begin{equation}\label{potential energy density}
    \frac{\partial e}{\partial t} = \Delta e - \| \beta \|^2 - g \big( \mathrm{Ric}_M \nabla_V u, \nabla_V u \big) + g \big( R_N(\nabla_V u, \nabla_W u) \nabla_V u, \nabla_W u \big)\textrm{,}
\end{equation}
where $\Delta$ is the Laplace--Beltrami operator on $M$, $\beta$ is the second fundamental form of the time-slices of $u$, $\mathrm{Ric}_M$ is the Ricci curvature tensor of $M$, $R_N$ is the Riemannian curvature tensor of $N$, and repeated subscripts indicate a trace. The kinetic energy density $\kappa = g \big( \frac{\partial u}{\partial t}, \frac{\partial u}{\partial t} \big)$ satisfies
\begin{equation}\label{kinetic energy density}
    \frac{\partial \kappa}{\partial t} = \Delta \kappa - \big\| \nabla \frac{\partial u}{\partial t} \big\|^2 + g \Big( R_N(\nabla_V u, \frac{\partial u}{\partial t}) \nabla_V u, \frac{\partial u}{\partial t} \Big)\textrm{.}
\end{equation}
Using equations \eqref{potential energy density} and \eqref{kinetic energy density}, along with a number of comparison arguments that build on the parabolic maximum principle, Hamilton was in fact able to show that solutions converge in $C^\infty(M,N)$ to harmonic maps.

Hamilton also proved that solutions to \eqref{heat equation} or \eqref{dirichlet heat equation} will not leave a compact and locally convex subset of $N$ with codimension zero and smooth boundary before the image of $\partial M$ does. That result will be generalized here to arbitrary compact and locally convex sets.

\begin{theorem}\label{maximum principle for heat flow}
    Let $M$ and $N$ be Riemannian manifolds, where $M$ is compact and possibly has boundary $\partial M \neq \emptyset$. Let $Y \subseteq N$ be a compact and locally convex set. Suppose $u : M \times [a,b] \rightarrow N$ is a continuous function that, in the interior of $M \times [a,b]$, is smooth and satisfies $\frac{\partial u}{\partial t} = \tau$. If $u(M \times \{ a \}) \subseteq Y$ and, in the case that $\partial M \neq \emptyset$, $u(\partial M \times [a,b]) \subseteq Y$, then $u(M \times [a,b]) \subseteq Y$.
\end{theorem}

\noindent The proof combines Hamilton's ideas with those of L. Christopher Evans \cite{Evans2010} about viscosity solutions. Specifically, the result is achieved by applying a viscosity form of a maximum principle in \cite{Hamilton1975} to the composition of the flow with the distance function to $Y$.

In \cite{Evans2010}, Evans proved a strong maximum principle for the reaction-diffusion systems in $\real^n$ that he was studying, namely, that a solution that maps into a convex set may touch the boundary at a positive time only if it is entirely contained within the boundary until then. It is reasonable to expect that this carries over to the heat flow on manifolds, although the necessary parabolic strong maximum principle for viscosity solutions appears to be absent in the Riemannian setting. At the end of the paper, a corresponding result for the heat flow is proved, assuming the latter principle.\\

\noindent \textit{Organization of the paper.} Section 2 contains background information about convexity in Riemannian manifolds, Section 3 contains background information about viscosity solutions, and Section 4 contains the proof of Theorem \ref{maximum principle for heat flow}. The proof of a corresponding strong maximum principal for the heat flow is sketched in Section 5, modulo a Riemannian version of the parabolic strong maximum principle for viscosity solutions.

\section{Convexity}

Let $N$ be a Riemannian manifold. A subset $Y \subseteq N$ is \textbf{strongly convex} if, for each $p,q \in Y$, there exists a unique minimal geodesic $\gamma : [0,1] \rightarrow N$ such that $\gamma(0) = p$, $\gamma(1) = q$, and $\gamma([0,1]) \subseteq Y$. The convexity radius of $N$ will be denoted $r : N \rightarrow (0,\infty]$. This is the continuous function characterized by the fact that, for each $y \in N$,
\[
	r(y) = \max \{ \,\varepsilon> 0 \, \,|\, \rmB(x,\delta) \textrm{ is strongly convex for all } 0 < \delta < \varepsilon \}\textrm{.}
\]
A subset $Y \subseteq N$ is \textbf{locally convex} if, for each $p \in \overline{Y}$, there exists $0 < \varepsilon(p) < r(p)$ such that $Y \cap \rmB \big( p,\varepsilon(p) \big)$ is strongly convex.

The following theorem about the structure of locally convex sets was proved by Ozols \cite{Ozols1969} and, independently, Cheeger--Gromoll \cite{CheegerGromoll1972}.

\begin{theorem}\label{locally convex set}

(Ozols, Cheeger--Gromoll) Let $N$ be a Riemannian manifold. If $Y \subseteq N$ is a closed and locally convex set, then $Y$ is an embedded submanifold of $N$ with smooth and totally geodesic interior and possibly non-smooth boundary.

\end{theorem}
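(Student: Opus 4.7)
The plan is to construct, for each $q \in Y$, a tangent cone $C_qY \subseteq T_qN$, and then to use it to equip the interior of $Y$ with smooth submanifold charts. Since the conclusion is local, I may assume there exist $p \in Y$ and $\varepsilon < r(p)$ with $Y \subseteq \rmB(p,\varepsilon)$ and $Y$ strongly convex inside $\rmB(p,\varepsilon)$. For each $q \in Y$, define
\[
C_q Y = \bigl\{ v \in T_q N : \exp_q(tv) \in Y \text{ for all sufficiently small } t \geq 0 \bigr\},
\]
and set $L_q Y = \mathrm{span}(C_q Y)$ and $k(q) = \dim L_q Y$.

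The first step is to show $C_qY$ is a closed convex cone. Homogeneity under non-negative scaling is automatic. For additivity, given $v, w \in C_qY$, the points $\exp_q(tv)$ and $\exp_q(tw)$ lie in $Y$ for small $t > 0$, and local strong convexity forces the minimizing geodesic between them into $Y$; a first-order computation in normal coordinates at $q$ identifies its midpoint with $\exp_q\bigl(\tfrac{t}{2}(v+w)\bigr)$ up to $O(t^2)$, so closedness of $Y$ delivers $v+w \in C_qY$. Closedness of the cone itself follows from continuity of $\exp$ together with closedness of $Y$.

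The second step is that $k$ is locally constant on $Y$, proved via semi-continuity in both directions. If $q_n \to q$ and $v_1, \ldots, v_{k(q)} \in C_qY$ is an orthonormal basis of $L_qY$, then for small $t > 0$ the minimizing geodesic from $q_n$ to $\exp_q(tv_i)$ lies in $Y$, and its initial velocity at $q_n$ approaches the parallel transport of $v_i$; this gives $k(q_n) \geq k(q)$ for large $n$. Conversely, orthonormal bases of $L_{q_n}Y$ chosen inside $C_{q_n}Y$ and parallel transported back to $q$ admit a convergent subsequence whose orthonormal limit lies in $C_qY$, so $k(q) \geq \limsup k(q_n)$. In the third step, declare $q$ an \emph{interior point} when $C_qY = L_qY$ and a \emph{boundary point} otherwise. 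At an interior $q$, any $q' \in Y \cap \rmB(q,\delta)$ is joined to $q$ by a minimizing geodesic in $Y$, so $\exp_q^{-1}(q') \in C_qY = L_qY$; combined with local constancy of $k$, this shows $\exp_q$ identifies a neighborhood of $0$ in $L_qY$ with a neighborhood of $q$ in $Y$, yielding a smooth embedded chart of dimension $k$ for $\mathrm{int}(Y)$ with $T_q\mathrm{int}(Y) = L_qY$. Total geodesy is then immediate: for $v \in L_qY = C_qY$ we have $\pm v \in C_qY$, so $\exp_q(tv)$ stays in $Y$---and, by local constancy of $k$, in $\mathrm{int}(Y)$---for all small $t \in \real$.

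The main obstacle will be Step~2. Controlling how $k$ varies requires combining strong convexity with parallel transport and a Grassmannian compactness argument in a way that prevents the limiting linear spans from collapsing or expanding; the use of orthonormal bases, whose orthonormality is preserved under parallel transport and passage to limits, is what makes this work. Once local constancy of $k$ is in hand, the chart construction and totally geodesic property in Step~3 follow with minor bookkeeping, and the boundary $\partial Y = Y \setminus \mathrm{int}(Y)$ is simply what remains---in general non-smooth, as convex polytopes in $\real^n$ already illustrate.
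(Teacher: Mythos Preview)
The paper does not give its own proof of this theorem; it is quoted from Ozols and Cheeger--Gromoll and used as a black box. Your outline follows the general shape of Cheeger--Gromoll's original argument (tangent cones, constancy of their linear dimension, exponential charts at interior points), so in spirit you are reconstructing the cited proof rather than offering an alternative.

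That said, your sketch has a genuine gap in Step~2. You write ``orthonormal bases of $L_{q_n}Y$ chosen inside $C_{q_n}Y$'' (and, in the other direction, an orthonormal basis of $L_qY$ inside $C_qY$), but a proper closed convex cone need not contain an orthonormal basis of its linear span. For instance, $C = \{(x,y) \in \real^2 : x \geq 0,\ |y| \leq x/10\}$ has span $\real^2$, yet any two nonzero vectors in $C$ have strictly positive inner product, so $C$ contains no orthogonal pair. Both of your semicontinuity arguments lean on this choice, and without orthonormality there is nothing preventing a spanning set of unit vectors in $C_{q_n}Y$ from degenerating to a linearly dependent limit after parallel transport. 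The actual Cheeger--Gromoll argument is arranged differently: one first shows that the maximal value of $k$ is attained on an open dense subset of $Y$ (the eventual interior), where $C_qY$ really is a linear subspace and orthonormal frames are available, and handles the remaining points separately. A smaller issue occurs in Step~1: the midpoint computation only produces a point of $Y$ at $\exp_q\bigl(\tfrac{t}{2}(v+w)\bigr) + O(t^2)$, and invoking ``closedness of $Y$'' does not by itself force the entire ray in direction $v+w$ to lie in $Y$; an additional rescaling or limiting argument is needed, and indeed convexity of the tangent cone is typically established only after passing to its closure (note that the paper itself uses $\overline{C}_{\pi(y)}$ in Lemma~\ref{supporting hyperplane}).
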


\noindent It's also shown in \cite{CheegerGromoll1972} that, at each $p \in Y$, $Y$ has a unique tangent cone, given by
\[
	C_p = \{ t \cdot \exp_p^{-1}(y) \st y \in Y \cap \rmB \big( p,r(p) \big), t \geq 0 \}\textrm{.}
\]
The fact that $C_p$ is a cone means that it is, itself, a convex set. By Theorem \ref{locally convex set}, when $p$ lies in $Y^\circ$, $C_p = \rmT_p Y$. When $p$ lies in $\partial Y$, $C_p$ is a manifold with boundary. In either case, $C_p$ has the same dimension as $Y$.

The following theorem about metric projection onto locally convex sets was proved by Walter \cite{Walter1974}.

\begin{theorem}\label{projection onto locally convex set}

(Walter) Let $N$ be a Riemannian manifold. If $Y \subseteq N$ is a closed and locally convex set, then there exists an open set $U \subseteq N$ containing $Y$ on which the nearest-point projection $\pi : U \rightarrow Y$ is well-defined and locally Lipschitz continuous. Moreover, the map $(x,t) \mapsto \exp_x \big( t \cdot \exp_x^{-1} \pi(x) \big)$, defined from $U \times [0,1]$ into $N$, is a locally Lipschitz strong deformation retraction of $U$ onto $Y$.

\end{theorem}

\noindent If $Y$ in the above theorem is compact, one may take $U = \rmB(Y,\varepsilon)$ for some $\varepsilon > 0$.

\section{Viscosity solutions}

To generalize Hamilton's result to arbitrary convex sets, the idea of viscosity solutions to certain elliptic and parabolic differential equations will be needed. To simplify things, the definitions given here will be rather specific. For a more general treatment of the subject, see \cite{CrandallIshiiLions1992} or, for the elliptic case in the Riemannian setting, \cite{GoffiPediconi2021}.

If $f$ is a real-valued function on a topological space and $x_0$ is in the domain of $f$, then a function $\phi$ \textbf{touches $f$ from above at $x_0$} if $\phi(x_0) = f(x_0)$ and $\phi \geq f$ on a neighborhood of $x_0$. Similarly, $\phi$ \textbf{touches $f$ from below at $x_0$} if $\phi(x_0) = f(x_0)$ and $\phi \leq f$ on a neighborhood of $x_0$.

Let $M$ be a Riemannian manifold, possibly with boundary, $f : M \times [a,b] \to \real$ a continuous function, and $C \in \real$. Then, $f$ is a \textbf{viscosity solution} to $\frac{\partial f}{\partial t} - \Delta f - Cf \leq 0$ at $(x_0,t_0)$ in the interior of $M \times [a,b]$ if, for every $C^2$ function $\phi$ defined on a neighborhood of $(x_0,t_0)$ that touches $f$ from above at $(x_0,t_0)$, one has that $\frac{\partial \phi}{\partial t} - \Delta \phi - C\phi \leq 0$ at $(x_0,t_0)$. Similarly, $f$ is a \textbf{viscosity solution} to $\frac{\partial f}{\partial t} - \Delta f + Cf \geq 0$ at $(x_0,t_0)$ if, for every every $C^2$ function $\phi$ that touches $f$ from below at $(x_0,t_0)$, one has that $\frac{\partial \phi}{\partial t} - \Delta \phi + C\phi \geq 0$ at $(x_0,t_0)$. One defines viscosity solutions $f : M \to \real$ to $\Delta f \leq 0$ and $\Delta f \geq 0$ in the analogous way.

The following maximum principle for viscosity solutions generalizes a result of Hamilton (see p. 101 of \cite{Hamilton1975}).

\begin{theorem}\label{maximum principle}

Let $M$ be a compact Riemannian manifold, possibly with boundary. Suppose that $f : M \times [a,b] \rightarrow \real$ is a continuous function such that $f \leq 0$ on $M \times \{ a \}$ and, in the case that $\partial M \neq \emptyset$, on $\partial M \times [a,b]$. If there exists $C \in \real$ such that, at any point in the interior of $M \times [a,b]$ where $f > 0$, $f$ is a viscosity solution to $\frac{\partial f}{\partial t} - \Delta f - Cf \leq 0$, then $f \leq 0$ on $M \times [a,b]$.

\end{theorem}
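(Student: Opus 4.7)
The plan is to adapt the classical Hamilton-style parabolic maximum principle to the viscosity setting, using an exponential time shift to absorb the $Cf$ term and a time-barrier penalization to sidestep the terminal time $t = b$, which is excluded from the assumed ``interior'' region where the subsolution property holds.

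First I would set $h = e^{-\alpha t} f$ for some fixed $\alpha > C$. The viscosity subsolution hypothesis transfers nicely under this scaling: if $\varphi$ is any smooth test function touching $h$ from above at a point $(x_0, t_0)$, then $\psi(x, t) = e^{\alpha t}\varphi(x, t)$ touches $f$ from above there, and applying the assumed sub-inequality to $\psi$ and dividing through by $e^{\alpha t_0}$ gives
\[
\partial_t \varphi(x_0, t_0) - \Delta \varphi(x_0, t_0) \;\leq\; (C - \alpha)\, h(x_0, t_0).
\]
At any point where $h > 0$ (equivalently where $f > 0$), the right-hand side is strictly negative. Since $e^{\alpha t} > 0$, the parabolic boundary condition $f \leq 0$ translates to $h \leq 0$ on $(M \times \{a\}) \cup (\partial M \times [a, b])$.

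Next, fix any $b' \in (a, b)$ and $\varepsilon > 0$, and consider the auxiliary function $h_\varepsilon(x, t) = h(x, t) - \varepsilon/(b' - t)$ on $M \times [a, b')$. Because $h$ is bounded on the compact set $M \times [a, b]$, $h_\varepsilon \to -\infty$ uniformly as $t \uparrow b'$, so its supremum is attained at some $(x_0, t_0) \in M \times [a, b')$. On the parabolic boundary of this smaller cylinder, the bounds $h \leq 0$ and $1/(b' - t) \geq 1/(b' - a)$ yield $h_\varepsilon \leq -\varepsilon/(b' - a) < 0$, so if the supremum were positive, the maximizer would necessarily lie in $\mathrm{int}(M) \times (a, b')$, which is contained in the interior of $M \times [a, b]$ where the viscosity hypothesis applies. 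The smooth, space-independent function
\[
\varphi(x, t) \;=\; h(x_0, t_0) + \varepsilon\bigl[(b' - t)^{-1} - (b' - t_0)^{-1}\bigr]
\]
touches $h$ from above at $(x_0, t_0)$, and since $h(x_0, t_0) = h_\varepsilon(x_0, t_0) + \varepsilon/(b' - t_0) > 0$, the inequality from the previous step would give
\[
\frac{\varepsilon}{(b' - t_0)^2} \;=\; \partial_t \varphi(x_0, t_0) - \Delta \varphi(x_0, t_0) \;\leq\; (C - \alpha)\, h(x_0, t_0) \;<\; 0,
\]
a contradiction.

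Hence $h_\varepsilon \leq 0$ throughout $M \times [a, b')$; sending $\varepsilon \downarrow 0$ forces $h \leq 0$ on $M \times [a, b')$, and continuity of $f$ as $b' \uparrow b$ then yields $f = e^{\alpha t} h \leq 0$ on all of $M \times [a, b]$. The main obstacle, as I see it, is purely bookkeeping: correctly translating the viscosity subsolution condition through the exponential substitution (ensuring that test functions from above are preserved), and arranging the time barrier $\varepsilon/(b' - t)$ so that every candidate maximum is forced into the genuine parabolic interior, bypassing the need for any one-sided viscosity condition at the terminal time $t = b$.
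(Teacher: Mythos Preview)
Your argument is correct and follows the same overall strategy as the paper: multiply by an exponential weight $e^{-\alpha t}$ (the paper takes $\alpha=C+1$) so that the viscosity inequality becomes strict where $h>0$, locate a positive interior maximum, and derive a contradiction by plugging in a smooth test function touching from above.

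The only real difference is in the choice of that test function. The paper restricts to $M\times[a,T]$ with $T<b$, takes the maximum directly, and uses the \emph{constant} function $\psi\equiv h(x_0,t_0)$ as the test function, after an intermediate observation about arbitrary $\phi$. Your barrier $\varepsilon/(b'-t)$ does two jobs at once: it forces the maximizer strictly into $\mathrm{int}(M)\times(a,b')$ and it automatically hands you the smooth test function $\varphi(x,t)=h(x_0,t_0)+\varepsilon[(b'-t)^{-1}-(b'-t_0)^{-1}]$. This is the standard Crandall--Ishii--Lions device, and it sidesteps the small subtlety in the paper's argument when the maximum happens to fall at $t_0=T$ (where one must still check that the constant touches from above on a full two-sided neighborhood in time). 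Both routes finish by letting the cutoff time tend to $b$ and invoking continuity.
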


\begin{proof}

The trick is to define a function $h : M \times [a,b] \rightarrow \real$ by $h(x,t) = e^{-(C+1)t}f(x,t)$. Then, $h > 0$ if and only if $f > 0$. Fix $a < T < b$. Assume $h$ is positive somewhere on $M \times [a,T]$. Then, by compactness, $h$ achieves a positive maximum on $M \times [a,T]$, say at $(x_0,t_0)$. By assumption, $x_0 \not\in \partial M$ and $0 < t_0 \leq T < b$, so $(x_0,t_0)$ lies in the interior of $M \times [a,b]$, and $h$ satisfies $\frac{\partial h}{\partial t} - \Delta h + h \leq 0$ in the viscosity sense at $(x_0,t_0)$. Because $x_0$ is a global maximum of $h$ on $M \times \{ t_0 \}$, one has that $\Delta h \leq 0$ in the viscosity sense there. Let $\phi$ be a $C^2$ function that touches $h$ from above at $(x_0,t_0)$. By the definition of viscosity solution, $\frac{\partial \phi}{\partial t}|_{(x_0,t_0)} - \Delta \phi|_{(x_0,t_0)} + \phi(x_0,t_0) \leq 0$. Since $\Delta \phi|_{(x_0,t_0)} \leq 0$ and $\phi(x_0,t_0) > 0$, $\frac{\partial \phi}{\partial t}|_{(x_0,t_0)} < 0$. But, this implies that the constant function $\psi(x,t) = h(x_0,t_0)$ touches $h$ from above at $(x_0,t_0)$, which means that $\frac{\partial \psi}{\partial t}|_{(x_0,t_0)} - \Delta \psi|_{(x_0,t_0)} + \psi(x_0,t_0) = \psi(x_0,t_0) = h(x_0,t_0) \leq 0$. This is a contradiction. Thus, $h \leq 0$ on $M \times [a,T]$, and, letting $T \rightarrow b$, the result follows by continuity.

\end{proof}

\section{Proof of Theorem \ref{maximum principle for heat flow}}

Fix everything as in the statement of the theorem. By Theorem \ref{projection onto locally convex set}, there exists $\varepsilon > 0$ such that the projection $\pi : \rmB(Y, \varepsilon) \rightarrow Y$ is well-defined and continuous. Because $\overline{\rmB}(Y,2\varepsilon)$ is compact, standard curvature comparison arguments imply the existence of a lower bound $0 < R < r \big( \overline{\rmB}(Y,\varepsilon) \big)$ for the focal radius $r_f \big( \overline{\rmB}(Y,\varepsilon) \big) = \displaystyle \inf_{y \in \overline{\rmB}(Y,\varepsilon)} r_f(y)$, where by definition
\begin{align*}
r_f(p) = \min \{ T > 0 \st \exists &\textrm{ a non-trivial normal Jacobi field } J \textrm{ along a unit-speed geodesic } \gamma\\
&\textrm{ with } \gamma(0) = p \textrm{, } J(0) = 0 \textrm{, and } \| J \|'(T) = 0 \} \textrm{.}
\end{align*}
Shrinking $\varepsilon$, if necessary, one may suppose that $0 < \varepsilon < R$. Let $H$ be any hyperplane tangent to a point $q \in \overline{\rmB}(Y,\varepsilon)$; that is, let $H$ be an element of the Grassmannian $G \big( n-1,\overline{B}(Y,\varepsilon) \big) \subseteq G(n-1,n)$, where $n = \dim(N)$. Let $v \in H^\perp$ have unit length, so that $H^\perp = \{ tv \st t \in \real \}$. For each $0 \leq t < R$, the exponential map restricted to the normal bundle of the embedded submanifold $\exp_q \big( H \cap \rmB(0,R) \big)$ is a local diffeomorphism around the vector $-t v$. It follows that there exists $0 < \delta_H < R$ such that, for $S_H = \exp_q \big( H \cap \rmB(0,\delta_H) \big)$ and $P_H = \{ w \in S_H^\perp \st \| w \| < \varepsilon \}$, the map $\exp|_{P_H}$ is a diffeomorphism onto its image. Similarly, there exists an open set $U_v$ containing $\exp_q(-tv)$ such that, for each $z \in U_v$, the minimal geodesic $\gamma_z$ connecting $z$ to $q$ remains inside $\exp(P_H)$. Without loss of generality, one may take $U_v$ to be small enough that $U_v \cap S_H = \emptyset$.

Let $\rmS Y = \{ w_y \in \rmT N \st y \in Y, \| y \| = 1 \}$ denote the unit sphere bundle of $Y$. Denote by $\mathscr{S}_{G \big( n-1,\overline{\rmB}(Y,\varepsilon) \big)}$ the space of bilinear forms on hyperplanes in $G \big( n-1,\overline{\rmB}(Y,\varepsilon) \big)$. Define a function $\II : \rmS Y \times [0,\varepsilon] \rightarrow \mathscr{S}_{G \big( n-1,\overline{\rmB}(Y,\varepsilon) \big)}$ by setting $\II(w_y,t)$ equal to the second fundamental form of the level set of $\rmd_{S_{w_y^\perp}}$ through $\exp_y (-tw_y)$; equivalently, $\II(w_y,t)$ is the Hessian of $\rmd_{S_{w_y^\perp}}$ at $\exp_y (-tw_y)$. With respect to the usual smooth structure on $\mathscr{S}_{G(n-1,N)}$ inherited from its structure as a vector bundle over $G(n-1,N)$, the map $\II$ is smooth. Let $\mu : \rmS Y \times [0,\varepsilon] \rightarrow \real$ be the function that takes $(w_y,t)$ to the minimum eigenvalue of $\II(w_y,t)$.

\begin{lemma}\label{lipschitz continuous}

The function $\mu$ is Lipschitz continuous.

\end{lemma}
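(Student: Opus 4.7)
The plan is to factor $\mu = \lambda_{\min} \circ \II$ and argue that each factor is Lipschitz on the compact domain $\rmS Y \times [0,\varepsilon]$. First I would verify compactness of the domain: $Y$ is compact by hypothesis, so the unit sphere bundle $\rmS Y \subseteq \rmT N$ is compact, and $[0,\varepsilon]$ is a compact interval. Smoothness of $\II$ was asserted in the paragraph preceding the lemma. Covering the relevant compact piece of the Grassmannian by finitely many trivializing neighborhoods of the vector bundle $\mathscr{S}_{G(n-1,N)}$ turns $\II$, on each corresponding piece of its domain, into a smooth map into a fixed Euclidean space of symmetric bilinear forms. A Lebesgue number argument over this finite cover then yields a uniform Lipschitz constant $L$ for $\II$ with respect to any compatible bundle metric.

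For the second factor I would invoke Weyl's eigenvalue perturbation inequality:
\begin{equation*}
| \lambda_{\min}(A) - \lambda_{\min}(B) | \leq \| A - B \|_{\mathrm{op}}
\end{equation*}
for symmetric bilinear forms $A, B$ on an inner-product space. This is immediate from the variational formula $\lambda_{\min}(C) = \inf_{\|v\|=1} \langle Cv, v \rangle$, which writes $\lambda_{\min}$ as an infimum of functions that are affine, and $1$-Lipschitz in operator norm, in $C$. Because $\lambda_{\min}$ is invariant under orthogonal change of basis in the fiber, it descends to a well-defined $1$-Lipschitz function on each fiber of $\mathscr{S}_{G(n-1,N)}$, independent of the trivialization chosen.

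Combining the two, $\mu$ becomes, in each local trivialization, a composition of a Lipschitz map into a normed space of bilinear forms with a $1$-Lipschitz function, hence Lipschitz with constant bounded by $L$ times a finite norm-equivalence constant. These local estimates patch into global Lipschitz continuity on the compact source.

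The main obstacle will be the bundle-theoretic bookkeeping: values $\II(w_y,t)$ at distinct parameters a priori live in fibers over different hyperplanes in the Grassmannian, so they cannot be subtracted directly without committing to local trivializations, and one must check that the constants produced in adjacent charts are mutually consistent. Once this is arranged, the proof collapses to two standard ingredients — smoothness implies Lipschitz on compacta, and Weyl's inequality — stitched together by a finite trivializing cover.
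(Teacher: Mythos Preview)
Your proposal is correct and follows essentially the same route as the paper: both factor $\mu$ as the minimum-eigenvalue function composed with the smooth map $\II$, invoke the eigenvalue perturbation bound $|\lambda_{\min}(A)-\lambda_{\min}(B)|\leq\|A-B\|$ (the paper attributes this to Hamilton rather than calling it Weyl's inequality), and then use smoothness of $\II$ together with compactness of the domain to conclude. The only cosmetic difference is that the paper phrases the bundle bookkeeping in terms of local orthonormal frames and the natural metric on $\mathscr{S}_{G(n-1,N)}$, whereas you speak of trivializing charts and a Lebesgue-number patching argument; these are equivalent packagings of the same idea.
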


\begin{proof}

Let $V$ be a open subset of $N$ that's small enough that its closure is compact and admits an orthonormal frame $\{ e_1,\ldots,e_{n-1} \}$. For each $A \in \mathscr{S}_{G(n-1,V)}$ and $1 \leq i,j \leq n-1$, let $\varsigma_{ij}(A) = A(e_i,e_j)$. Write $A_\varsigma = \big[ \varsigma_{ij}(A) \big]_{1 \leq i,j \leq n-1}$. Then the eigenvalues of $A$ are equal to the eigenvalues of $A_\varsigma$. In particular, for the minimum eigenvalue function $\nu$, one has that $\nu(A) = \nu(A_\varsigma)$ on $G(n-1,V)$. Following Hamilton, one computes
\[
|\nu(A) - \nu(B)| = |\nu(A_\varsigma) - \nu(B_\varsigma)| \leq \| A_\varsigma - B_\varsigma \| \leq C \sum_{i,j=1}^{n-1} |\varsigma_{ij}(A) - \varsigma_{ij}(B)|\textrm{,}
\]
where $\| \cdot \|$ denotes the usual matrix norm and the constant $C$ exists because all norms on a finite-dimensional space are equivalent. Since the $\varsigma_{ij}$ vary smoothly, the term on the right is a Lipschitz continuous function on $\mathscr{S}_{G(n-1,V)} \times \mathscr{S}_{G(n-1,V)}$, i.e., $|\varsigma_{ij}(A) - \varsigma_{ij}(B)| \leq D \rmd(A,B)$, that latter distance being measured with respect to the natural metric on $\mathscr{S}_{G(n-1,N)}$. Thus $|\nu(A) - \nu(B)| \leq C D \rmd(A,B)$, and $\nu$ is locally Lipschitz. Since a locally Lipschitz function on a compact set is in fact Lipschitz, the restriction $\nu|_{\mathscr{S}_{G \big( n-1,\overline{\rmB}(Y,\varepsilon) \big)}}$ is Lipschitz. Because $\mu$ can be written as the composition of $\nu|_{\mathscr{S}_{G(n-1,\overline{\rmB}(Y,\varepsilon))}}$ with a smooth function defined on a compact set, $\mu$ is Lipschitz.

\end{proof}

\noindent For any $y \in \rmB(Y, \varepsilon) \setminus Y$, let $\gamma_y : [0,2] \rightarrow \rmB(Y, \varepsilon)$ be the unique minimal geodesic satisfying $\gamma_y(0) = y$ and $\gamma_y(1) = \pi(y)$, and set $v = \gamma_y'(1)$ and $H = H_y = v^\perp$ in the above construction. For simplicity of notation, write $\II_y = \II \big( \frac{-v}{\| v \|} \big)$; since this is a bilinear form on the tangent space to the level set of $\rmd_{S_{H_y}}$ through $y$, it accepts pairs of vectors, i.e., $\II_y = \II_y(\cdot,\cdot)$. Denote by $\varpi_y$ the projection from $T_y N$ onto the tangent space of the level set of $\rmd_{S_{H_y}}$ through $y$. Denote by $u_*$ the spatial derivative of $u$, i.e., the restriction of $\rmD u$ to the tangent space of $M$.

\begin{lemma}\label{coordinate-independent laplacian}

	Suppose $u : M \times [a,b] \rightarrow \rmB(Y,\varepsilon)$ is a continuous function that, in the interior of $M \times [a,b]$, is smooth and satisfies the heat equation $\frac{\partial u}{\partial t} = \tau_u$. Let $(x,t)$ be a point in the interior of $M \times [a,b]$ such that $y = u(x,t) \in \rmB(Y,\varepsilon) \setminus Y$. Then, at $(x,t)$, $\rho = \rmd_{S_{H_y}} \circ u$ satisfies
\[
	\Delta \rho = \frac{\partial \rho}{\partial t} + \trace \big( \II_y(\varpi_y \circ u_*, \varpi_y \circ u_*) \big)\textrm{.}
\]

\end{lemma}

\begin{proof}

On a small neighborhood of $(x,t)$, $u$ remains within $\exp(P_{H_y})$. In any local coordinates $(x_1,\ldots,x_m)$ for $M$ around $x$ and $(y_1,\ldots,y_n)$ for $N$ around $y$, Hamilton computes that
\[
g^{ij} \Big[ \frac{\partial^2 \rmd_{S_{H_y}}}{\partial y^\beta \partial y^\gamma} - \frac{\partial \rmd_{S_{H_y}}}{\partial y^\alpha} \Gamma_{\beta \gamma}^\alpha \Big] \frac{\partial u^\beta}{\partial x^i} \frac{\partial u^\gamma}{\partial x^j} = \Delta \rho - \frac{\partial \rho}{\partial t}\textrm{,}
\]
where $g^{ij}$ denote the coordinates of the inverse of the local expression $g_{ij}$ for the metric on $M$ and $\Gamma_{\beta \gamma}^\alpha$ are the Christoffel symbols of the coordinates on $N$. The matrix $\big[ \sigma_{\beta \gamma} \big]_{1 \leq \beta, \gamma \leq n-1}$, where $\sigma_{\beta \gamma} = \frac{\partial^2 \rmd_{S_{H_y}}}{\partial y^\beta \partial y^\gamma} - \frac{\partial \rmd_{S_{H_y}}}{\partial y^\alpha} \Gamma_{\beta \gamma}^\alpha$, is the coordinate representative of $\II_y$. In exponential normal coordinates for $M$ around $x$, $g^{ij}$ becomes the Kronecker delta; in normal coordinates with respect to $\exp|_{S_{H_y}}$ within $\exp(P_{H_y})$, $\sigma_{n \gamma} = \sigma_{\beta n} = 0$ for all $1 \leq \beta,\gamma \leq n$. Writing $m = \dim(M)$, one has that
\begin{align*}
g^{ij} \Big[ \frac{\partial^2 \rmd_{S_{H_y}}}{\partial y^\beta \partial y^\gamma} - \frac{\partial \rmd_{S_{H_y}}}{\partial y^\alpha} \Gamma_{\beta \gamma}^\alpha \Big] \frac{\partial u^\beta}{\partial x^i} \frac{\partial u^\gamma}{\partial x^j} &= \sum_{i=1}^m \Big[ \frac{\partial^2 \rmd_{S_{H_y}}}{\partial y^\beta \partial y^\gamma} - \frac{\partial \rmd_{S_{H_y}}}{\partial y^\alpha} \Gamma_{\beta \gamma}^\alpha \Big] \frac{\partial u^\beta}{\partial x^i} \frac{\partial u^\gamma}{\partial x^i}\\
&= \sum_{i=1}^m \II_y \Big( \varpi_y \circ u_* \big( \frac{\partial}{\partial x_i} \big), \varpi_y \circ u_* \big( \frac{\partial}{\partial x_i} \big) \Big)\\
&= \trace \big( \II_y(\varpi_y \circ u_*, \varpi_y \circ u_*) \big)\textrm{.}
\end{align*}

\end{proof}

\begin{lemma}\label{supporting hyperplane}

The subspace $H_y$ is a supporting hyperplane to $Y$, i.e., the closure $\overline{C}_{\pi(y)}$ of the tangent cone at $\pi(y)$ is contained in a closed half-space $\overline{H}_y$ with boundary $H_y$. Moreover, $\exp_{\pi(y)}^{-1}(y) \not\in \overline{H}_y$.

\end{lemma}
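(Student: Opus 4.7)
The plan is to exploit the fact that $\pi(y)$ minimizes the distance to $y$ among points of $Y$, combined with the observation that the function $\rmd(y,\cdot)$ is smooth at $\pi(y)$ with gradient equal to $v$. Indeed, since $\gamma_y$ is the unit-speed minimal geodesic from $y$ to $\pi(y)$ and $v$ is its tangent at the endpoint, a standard application of Gauss's lemma (or the first variation formula) identifies $\nabla \rmd(y,\cdot)|_{\pi(y)}$ with $v$. The supporting hyperplane property will then follow from a first-order expansion along directions into $Y$, and the separation of $\exp_{\pi(y)}^{-1}(y)$ from $\overline{H}_y$ will follow from writing that vector explicitly as a negative multiple of $v$.

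In detail, I would first fix any $z \in Y \cap \rmB \big( \pi(y),\varepsilon(\pi(y)) \big)$ and consider the curve $\sigma(t) = \exp_{\pi(y)} \big( t \cdot \exp_{\pi(y)}^{-1}(z) \big)$ for $t \in [0,1]$. Strong convexity of $Y \cap \rmB \big( \pi(y),\varepsilon(\pi(y)) \big)$, which is the defining property of local convexity used in Section 2, guarantees $\sigma([0,1]) \subseteq Y$, so $\rmd \big( y,\sigma(t) \big) \geq \rmd(y,Y) = \rmd \big( y,\sigma(0) \big)$. Differentiating at $t = 0$ and using $\nabla \rmd(y,\cdot)|_{\pi(y)} = v$ yields $\big\langle v, \exp_{\pi(y)}^{-1}(z) \big\rangle \geq 0$. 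From the description $C_{\pi(y)} = \{ t \cdot \exp_{\pi(y)}^{-1}(z) \st z \in Y \cap \rmB \big( \pi(y),r(\pi(y)) \big), t \geq 0 \}$ recorded after Theorem~\ref{locally convex set}, this inequality propagates to every vector of $C_{\pi(y)}$, and by continuity of $\langle v,\cdot \rangle$ to the closure $\overline{C}_{\pi(y)}$. Hence $\overline{C}_{\pi(y)} \subseteq \overline{H}_y$, where $\overline{H}_y = \{ w \in \rmT_{\pi(y)} N \st \langle v,w \rangle \geq 0 \}$ is the closed half-space bounded by $H_y = v^\perp$.

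For the separation statement, the reversed geodesic $s \mapsto \gamma_y \big( \rmd(y,Y) - s \big)$ is a unit-speed minimal geodesic from $\pi(y)$ to $y$ with initial velocity $-v$, so $\exp_{\pi(y)}^{-1}(y) = -\rmd(y,Y) \cdot v$. Pairing with $v$ gives $-\rmd(y,Y) < 0$, which puts this vector strictly inside the complementary open half-space and thus outside $\overline{H}_y$. The only step requiring genuine care is the passage from the inequality $\rmd(y,\sigma(t)) \geq \rmd(y,Y)$ to a first-order statement, which depends on knowing that $\sigma$ actually stays in $Y$; this is precisely the content of local convexity together with the choice $\varepsilon(\pi(y)) < r(\pi(y))$ made in Section~2, and once it is in hand the rest is a routine first-variation computation followed by a sign check.
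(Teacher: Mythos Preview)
Your proposal is correct and is precisely the argument the paper has in mind: the paper's proof consists of the single sentence ``This is a consequence of the first variation formula for length,'' and what you have written is a careful unpacking of that sentence. The only difference is level of detail, not method.
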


\begin{proof}

This is an immediate consequence of the first variation formula for arclength.

\end{proof}

\begin{lemma}\label{touches from below}

The function $\rmd_{S_y}$ touches $\rmd_Y$ from below at $y$.

\end{lemma}

\begin{proof}

Because $\exp|_{P_y}$ is a diffeomorphism, $\rmd_{S_y}(y) = \rmd(y,\pi(y)) = \rmd_Y(y)$. By Lemma \ref{supporting hyperplane}, for any $z \in U_y$, the geodesic $\gamma_z$ must hit $S_y$ before it hits $Y$. This shows that $\rmd_{S_y} \leq \rmd_Y$ within $U_y$.

\end{proof}

\begin{proof}[Proof of Theorem \ref{maximum principle for heat flow}]

Let $\rmd_Y : N \rightarrow [0,\infty)$ denote the distance to $Y$. The idea is to show that, wherever the composition $\sigma = \rmd_Y \circ u$ is positive and sufficiently small, it is a viscosity solution to an equation of the form $\frac{\partial \sigma}{\partial t} - \Delta \sigma - C \sigma \leq 0$. The result will then follow from Theorem \ref{maximum principle}.

By Lemma \ref{lipschitz continuous}, $\mu$ is Lipschitz continuous. Let $C_0 \geq 0$ be a Lipschitz constant for $\mu$. Since the exponential image of a hyperplane is totally geodesic at the image of the origin, $\mu(w,0) = 0$ for all $w$. Therefore, $|\mu(w,t)| = |\mu(w,t) - \mu(w,0)| \leq C_0|t - 0| = C_0 t$ and, consequently, $\mu(w,t) \geq -C_0 t$. By compactness, $\| u_* \|$ is bounded above on $M \times [a,b]$ by some $D_0 \geq 0$. Let $C = m D_0 C_0$. For all $y \in \overline{\rmB}(Y,\varepsilon)$ and $v = \gamma_y' \big( \rmd(y,Y) \big)$, one has that
\begin{equation}\label{trace bound}
\trace \big( \II_y(\varpi_y \circ u_*, \varpi_y \circ u_*) \big) \geq mD_0 \mu \Big( \frac{-v}{\| v \|}, \rmd(y,Y) \Big) \geq -m D_0 C_0 \rmd(y,Y) = -C \rmd(y,Y)\textrm{.}
\end{equation}
Assume that, somewhere in $M \times [a,b]$, $u$ maps outside of $Y$. This is equivalent to the statement that $\sigma > 0$ somewhere in $M \times [a,b]$. Because $u(M \times \{ a \}) \subseteq Y$ and $\| u_* \| \leq D_0$, one may, without loss of generality, shrink $b$ so that $u(M \times [a,b]) \subseteq B(Y,\varepsilon)$ and still have that $\sigma > 0$ somewhere in $M \times [a,b]$. Since $\sigma = 0$ on $M \times \{ a \}$ and, in the case that $\partial M \neq \emptyset$, on $\partial M \times [a,b]$, there must exist an interior point $(x_0,t_0)$ of $M \times [a,b]$ such that $\sigma(x_0,t_0) > 0$. By \eqref{trace bound},
\[
	\frac{\partial \rho}{\partial t} = \Delta \rho - \trace \big( \II_y(\varpi_y \circ u_*, \varpi_y \circ u_*) \big) \leq \Delta \rho + C \rho
\]
at $(x_0,t_0)$, so at that point $\rho$ satisfies $\frac{\partial \rho}{\partial t} - \Delta \rho - C \rho \leq 0$. Let $\phi$ be any smooth function that touches $\sigma$ from above at $(x_0,t_0)$. By Lemma \ref{touches from below}, $\rho$ touches $\sigma$ from below at $(x_0,t_0)$, which implies that $\phi$ touches $\rho$ from above at $(x_0,t_0)$. Thus $\frac{\partial \phi}{\partial t} = \frac{\partial \rho}{\partial t}$, $\Delta \phi \geq \Delta \rho$, and $\phi = \rho$ at $(x_0,t_0)$. So $\frac{\partial \phi}{\partial t} - \Delta \phi - C\phi \leq \frac{\partial \rho}{\partial t} - \Delta \rho - C\rho \leq 0$ at $(x_0,t_0)$. This shows that $\sigma$ is a viscosity solution to $\frac{\partial \sigma}{\partial t} - \Delta \sigma - C \sigma \leq 0$ at $(x_0,t_0)$. Theorem \ref{maximum principle} implies that $\sigma = 0$ on $M \times [a,b]$, a contradiction.

\end{proof}

\section{A potential strong maximum principle}

It is reasonable to expect that the strong maximum principle in \cite{Evans2010} may be adapted to the Riemannian heat flow. A proof would seem to depend on the following Riemannian version of the parabolic strong maximum principle for viscosity solutions:

\vspace{2pt}

\begin{adjustwidth}{30pt}{30pt}
	(*) Let $M$ be a compact Riemannian manifold, possibly with boundary. Suppose $f : M \times [a,b] \rightarrow [0,\infty)$ is a continuous function such that, for some $C \in \real$, $f$ is a viscosity solution to
	\[
		\frac{\partial f}{\partial t} - \Delta f + Cf \geq 0
	\]
	in the interior of $M \times [a,b]$. If $f(x_0,t_0) = 0$ for some $(x_0,t_0)$ in $M^\circ \times (a,b]$, then $f = 0$ on $M \times [a,t_0]$.
\end{adjustwidth}

\vspace{2pt}

\noindent Unfortunately, this parabolic Riemannian strong maximum principle does not appear to be written up in the literature (cf. \cite{DaLio2004}, \cite{Evans2010}, \cite{GoffiPediconi2021}). I hope to remedy this gap in a future paper. Assuming (*), one may obtain the following.

\begin{theorem}\label{strong maximum principle for the map itself}
	Suppose (*) holds. Let $M$ and $N$ be Riemannian manifolds, where $M$ is compact and possibly has boundary $\partial M \neq \emptyset$. Let $Y \subseteq N$ be a compact and locally convex set. Suppose $u : M \times [0,T] \rightarrow N$ is a continuous function that, in the interior of $M \times [0,T]$, is smooth and satisfies $\frac{\partial u}{\partial t} = \tau$. Suppose also that $u(M \times [0,T]) \subseteq Y$. If there exists $(x_0,t_0)$ in the interior of $M \times [0,T]$ such that $u(x_0,t_0) \in \partial Y$, then $u(M \times [0,t_0]) \subseteq \partial Y$.
\end{theorem}

\noindent The proof of Theorem \ref{strong maximum principle for the map itself} closely tracks the proof of Theorem \ref{maximum principle for heat flow}, so it will only be sketched. The most significant difference is that, while working from inside $Y$, it is not immediately obvious that the distance to a normal supporting hyperplane to $Y$ locally bounds the distance to $\partial Y$ from above. This is rectified by locally embedding $Y$ near its boundary within submanifolds of $N$ in which $Y$ has codimension zero. These may be obtained in the following way: By the Lebesgue number lemma, there exists $\varepsilon > 0$ such that, for every $y \in \partial Y$, both $B(y,\varepsilon)$ and $Y \cap B(y,\varepsilon)$ are strongly convex. For any $y \in Y^\circ \cap B(\partial Y, \varepsilon)$ and any nearest-point projection $\pi(y) \in \partial Y$, let $U_y \subseteq T_y Y$ be a neighborhood of the segment connecting the origin to $\exp^{-1}_y (\pi(y))$ small enough that $\exp_y|_{U_y}$ is a diffeomorphism onto its image. Then, $V_y = \exp_y(U_y)$ is a smooth submanifold of $N$ that has the same dimension as $Y$. Note that, for the unique minimal geodesic $\gamma_y : [0,1] \to Y$ connecting $y$ to $\pi(y)$, $\gamma_y([0,1]) \subseteq V_y$. Let $H_y = \gamma'(1)^\perp \subseteq T_{\pi(y)} V_y$. Then, $H_y$ is a supporting hyperplane to $Y$ within $V_y$, and, when distances are measured with respect to the induced metric on $V_y$, the exponential image $S_{H_y}$ of a small enough ball around the origin in $H_y$ has the property that $d_{S_{H_y}}$ touches $d_{\partial Y}$ from above at $y$. Similarly, if $y \in \partial Y$, then, within the exponential image of a small ball around the origin in the minimal subspace of $T_y N$ containing the cone $C_y$, the distance $d_{S_{H_y}}$ to the exponential image $S_{H_y}$ of a small ball around the origin in any supporting hyperplane $H_y$ to $C_y$ at the origin touches $d_{\partial Y}$ from above at $y$.

The remainder of the argument proceeds more or less as in the previous section. If $y = u(x,t)$, then, at $(x,t)$, the composition $\rho = d_{S_{H_y}} \circ u$ satisfies an equation of the form
\[
	\nabla \rho = \frac{\partial \rho}{\partial t} + \trace \big( \II_y(\varpi_y \circ u_*, \varpi_y \circ u_*) \big)\textrm{,}
\]
where $\varpi_y$ is projection onto the tangent space of the level set of $d_{S_{H_y}}$ through $y$, $\II_y$ is the second fundamental form of that level set, and $u_*$ is the spatial derivative of $u$. Since the largest eigenvalue of $\II_y$ varies Lipschitz continuously, it follows that there exists $C \geq 0$, independent of $(x,t)$, such that $\rho$ is a viscosity solution to an equation of the form
\[
	\frac{\partial \rho}{\partial t} - \Delta \rho + C\rho \geq 0
\]
at $(x,t)$. The proof is finished by applying (*).

\bibliography{bibliography}
\bibliographystyle{amsplain}

\end{document}